\documentclass[12pt]{amsart}

\calclayout
\usepackage{stmaryrd}
\usepackage{amsfonts}
\usepackage{mathtools}
\usepackage{amssymb}
\usepackage{wrapfig,lipsum,booktabs}
\usepackage{placeins}
\usepackage[hang]{subfigure}
\usepackage{amsmath} 
\usepackage{times}
\usepackage{floatrow}
\usepackage[utf8]{inputenc}
\usepackage[english]{babel}
\usepackage{amsthm}
\usepackage{subfigure} 
\usepackage{blindtext}
\usepackage{tkz-fct}
\usepackage[margin=1in]{geometry} 
\usepackage{tikz}
\usepackage{tikz-cd}
\usetikzlibrary{mindmap,trees}
\usepackage[final]{pdfpages}
\newtheorem{thm}{Theorem}
\newtheorem*{remark}{Remark}
\newtheorem*{lem}{Lemma}
\theoremstyle{definition}
\newtheorem*{definition}{Definition}
\newtheorem*{corollary}{Corollary}
\newtheorem{case}{Case}

\usepackage{times} 
\usepackage{floatrow}
\newfloatcommand{capbtabbox}{table}[][\FBwidth]
\usepackage{blindtext}
\usepackage{setspace}
\onehalfspacing

\author{Yazan Alamoudi}
\address{Department of Mathematics\\
University of Florida\\
Gainesville\\
FL 32611\\
United States
}
\email{yazanalamoudi@ufl.edu}

\dedicatory{Dedicated to George Andrews and Bruce Berndt.}
\subjclass[2000]{11P81, 11P83, 05A19}

\begin{document}
\title{ A bijective proof of Andrews' refinement of the Alladi-Schur theorem}

\begin{abstract}
This paper gives a bijective proof of Andrews' refinement of the Alladi-Schur theorem. Moreover, it demonstrates that the bijective framework introduced here can be used to reproduce and provide a bijective account of Andrews' recursive relations for the Alladi-Schur polynomials.
\end{abstract}

\maketitle
\section{Introduction}
In private communications, see \cite{Andrews2019}, K. Alladi informed G. Andrews of a new variant of Schur's partition theorem that deserved closer study. That variant, which Andrews dubbed the Alladi-Schur theorem, is the following. The number of partitions of $n$ into odd parts, each occurring at most twice, is equivalent to the number of Schur partitions of $n$. Recall that Schur partitions are partitions where the gap between consecutive parts is $\geq3$ and no consecutive multiples of $3$ occur. Subsequently, Andrews was able to refine the result to what follows.\\

Denote by $m(\pi)$ the number of parts \textbf{plus} the number of even parts of the partition $\pi$. Let $\mathcal{D}(m,n)$ denote the set of Schur partitions of $n$ with $m(\pi)=m$. Let $\mathcal{C}(m,n)$ denote the set of partitions of $n$ into $m$ odd parts, each occurring at most twice. For the remainder of this paper, we will call members of $\mathcal{C}(m,n)$ Alladi partitions. Then, Andrews was able to show that
\begin{equation}
  |\mathcal{D}(m,n)|=|\mathcal{C}(m,n)|  
\end{equation}
  by means of generating functions \cite{Andrews2019}.
More specifically, in \cite{Andrews2019}, Andrews established equation (1) by showing that
\begin{equation}
  \sum_{n,m\geq0} D(m,n)x^mq^n=\prod_{n=1}^\infty(1+xq^{2n-1}+x^2q^{4n-2}),
\end{equation}

where $|\mathcal{D}(m,n)|=D(m,n)$.\\

Here, I will establish equation (1) by giving a bijection from the set $\mathcal{D}(m,n)$ to the set $\mathcal{C}(m,n)$.\\

This paper is structured as follows. In Section 2, I will introduce and prove some of the key ingredients of the main result. Moreover, Section 2 will give the reader a general idea of the approach taken to construct the bijection. Sections 3 and 4 are the most technical, with Section 4 featuring the main result in Theorem 5. In Section 5, I will apply the framework developed in the earlier sections to obtain new, simpler proofs of Andrews' recursive relations, see \cite{Andrews2019} and \cite{10.1007/978-3-319-68376-8_3}, of the Alladi-Schur polynomials. Furthermore, we will see that these proofs give a complete bijective account of these recursive identities. Lastly, there is an appendix that is neither essential nor used in the body of the paper. However, it provides alternative perspectives that may be helpful for the reader.
\bigskip

\textbf{Note:}
Based on the recommendations of the editorial office, the author has reduced the use of the active voice (including in the abstract) in favor of the royal ``we" and the passive voice, as it is more consistent with standard practice. The reader should keep this in mind whenever tonal shifts occur, as the author is attempting to balance editorial recommendations with his natural voice.
\section{The bijective components and first results}

To understand many ideas in this paper, it helps to make a few simple observations. Let $p(x)$ denote the infinite product in equation $(2)$. Then, it is easy to see that 
\begin{equation}
p(x)=(1+xq+x^2q^2)p(xq^2). 
\end{equation}

For the Alladi partitions, the recursive equation $(3)$ is easy to understand. If $\pi\in\mathcal{C}(m,n)$, first, remove any occurrence of the part $1$, then subtract $2$ from every part in the resulting partition.

\bigskip

Now, let $\mathcal{C^*}(m,n)\subseteq\mathcal{C}(m,n)$ be the set of Alladi partitions where every part is $>1$. Denote the map removing any occurrence of the part $1$ by $G'$ and denote the map subtracting $2$ from every part by $\psi'$ and let $\phi'=\psi'G'$ (i.e. $\phi'(\pi)=\psi'(G'(\pi))$ for all $\pi\in\mathcal{C}(m,n)$). We have the following three bijections. 
\begingroup
\setlength{\jot}{15pt}
\begin{gather*}
    G':\mathcal{C}(m,n)\to \mathcal{C^*}(m,n)\cup\mathcal{C^*}(m-1,n-1)\cup\mathcal{C^*}(m-2,n-2),\\
    \psi':\mathcal{C^*}(m,n)\to\mathcal{C}(m,n-2m),\\
    \phi':\mathcal{C}(m,n)\to \mathcal{C}(m,n-2m)\cup\mathcal{C}(m-1,n-2m+1)\cup\mathcal{C}(m-2,n-2m+2).
\end{gather*}
\endgroup

In other words, $\phi'$ corresponds to equation $(3)$ for the Alladi partitions. Call $G'$ the Alladi grouping map and $\psi'$ the Alladi reduction\footnote{In earlier drafts and presentations of this paper, the map $\psi'$ was referred to as the Alladi projection map, as that is what I formerly named it. Likewise, I formerly named the analogous map $\psi$, constructed in Section 4, the Schur projection map and the map $\rho$, also constructed in Section 4, the projection map on the minimal segments. Although the term ``projection", as previously used in the name of these maps, originates from earlier versions of this paper, I settled on ``reduction" as a more appropriate term. This is because it is typical to use the term projection for idempotent maps.} map. A big part of what I will do is construct corresponding maps for the Schur partitions. We will begin this with the following bijection.\footnote{For the remainder of this paper, the sum of the parts of a partition $\pi$ is denoted by $\sigma(\pi)$.}

\begin{lem}{(Schur grouping map)}
Let $\mathcal{D^*}(m,n)$ denote the subset of partitions in $\mathcal{D}(m,n)$ with the added condition that every odd part is $\geq 3$ and every even part is $\geq 6$.
There is a bijection $$G:\mathcal{D}(m,n)\to \mathcal{D^*}(m,n)\cup\mathcal{D^*}(m-1,n-1)\cup\mathcal{D^*}(m-2,n-2).$$

\end{lem}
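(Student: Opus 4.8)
The plan is to define $G$ by cases according to which of the forbidden small parts $1, 2, 4$ occur in $\pi$, since $\mathcal{D}^*$ is precisely the set of Schur partitions avoiding all of $1, 2, 4$. First I would observe that the Schur conditions (distinct parts, gaps $\geq 3$, no two adjacent multiples of $3$) severely restrict which small parts can coexist: the only possible configurations of the parts that are $\leq 4$ are $\emptyset$, $\{1\}$, $\{2\}$, $\{4\}$, and $\{1,4\}$, since the pairs $\{1,2\}$ and $\{2,4\}$ are excluded by the gap condition. I would then define $G$ on each configuration so as to land in $\mathcal{D}^*$ while decreasing $(m,n)$ by exactly one of the three allowed amounts $(0,0)$, $(1,1)$, $(2,2)$ dictated by the factor $(1+xq+x^2q^2)$.

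The assignments I would make are as follows. If $\pi$ already avoids $1,2,4$, set $G(\pi)=\pi$ (shift $(0,0)$); if the smallest part is $1$ and the next part exceeds $4$, delete the $1$ (shift $(1,1)$, since $1$ is odd and so contributes $1$ to $m$); if the smallest part is $2$, delete it (shift $(2,2)$, since $2$ is even and so contributes $2$ to $m$). The delicate cases are those involving the part $4$: because $4$ contributes $2$ to $m$ but $4$ to $\sigma$, deleting it would produce the forbidden shift $(2,4)$. Instead I would \emph{replace} $4$ by $3$, turning an even part into an odd part while leaving the number of parts fixed, so that $m$ and $n$ each drop by $1$, giving shift $(1,1)$. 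Accordingly, when the smallest part is $4$ I replace $4$ by $3$, and when the smallest two parts are $1$ and $4$ I both delete the $1$ and replace $4$ by $3$ (shift $(2,2)$).

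To prove bijectivity I would exhibit the inverse by showing that each of the two nontrivial target sets, $\mathcal{D}^*(m-1,n-1)$ and $\mathcal{D}^*(m-2,n-2)$, splits according to the value of its smallest part. In each, the deletion case accounts for the partitions with smallest part $\geq 5$, undone by re-inserting the appropriate small part (whose gap to the old smallest part is then $\geq 3$), while the $4\to 3$ case accounts for those with smallest part equal to $3$, undone by promoting the $3$ back to a $4$; the configuration $\emptyset$ fills $\mathcal{D}^*(m,n)$ entirely. Since a $\mathcal{D}^*$ partition has smallest part in $\{3,5,6,7,\dots\}$, these two classes partition each target, so the inverse maps are well defined and $G$ is a bijection.

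The main obstacle, and the point requiring genuine care, is verifying that the $4\leftrightarrow 3$ swap respects the Schur conditions in both directions. The forward direction is routine, but for the inverse I must check that replacing a smallest part $3$ by $4$ never violates the gap condition. This is exactly where the ``no consecutive multiples of $3$'' hypothesis does the real work: if the smallest part of a $\mathcal{D}^*$ partition is $3$, then its second-smallest part cannot be $6$ (that would be two adjacent multiples of $3$), and it cannot be $4$ or $5$ by the gap condition, so it must be $\geq 7$; hence after promoting $3$ to $4$ the gap to the next part remains $\geq 3$. Confirming this, together with the routine checks that each map preserves distinctness and the multiple-of-$3$ restriction and that the bookkeeping on the number of parts plus even parts is correct, completes the argument.
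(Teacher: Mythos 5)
Your map is exactly the paper's: delete the parts less than $3$ and replace any $4$ by $3$, with the inverse recovered by cases on the weight deficit and on whether $3$ is a part. The proposal is correct and takes essentially the same approach; your explicit check that promoting a smallest part $3$ back to $4$ cannot violate the gap or multiple-of-$3$ conditions is a detail the paper leaves implicit.
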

\begin{proof}
First, define the Schur grouping map $G$ on $\mathcal{D}(m,n)$ as follows. For $\pi \in \mathcal{D}(m,n)$, remove any part less than $3$; in addition, replace any occurrence of the part $4$ with $3$. This way\\ $G(\pi)\in \mathcal{D^*}(m,n)\cup\mathcal{D^*}(m-1,n-1)\cup\mathcal{D^*}(m-2,n-2)$. It's clear that $G$ is invertible. Indeed, take  $\pi'\in \mathcal{D^*}(m,n)\cup\mathcal{D^*}(m-1,n-1)\cup\mathcal{D^*}(m-2,n-2)$. If $\sigma(\pi')=n$, then we already have $G(\pi')=\pi'$. If $\sigma(\pi')=n-1$ and $3$ is a part, we replace the $3$ with $4$ to obtain the preimage of $\pi'$. Otherwise, we just insert $1$ as a part. Lastly, if $\sigma(\pi')=n-2$ and $3$ is a part, we replace the $3$ with $4$ and insert $1$ as a part to obtain the preimage of $\pi'$. Otherwise, we just insert $2$ as a part. This completes the proof. 
\end{proof}

\begin{remark}
We note a subtlety with both grouping maps.\footnote{See also the remark after the corollary to Theorem 5.} It almost seems like if you apply the procedure twice, you get the same thing. However, it is important to note that the definition of the inverse of the grouping map, and thus the grouping map as a bijection, depends on the domain of $G$. So it is more accurate to write $G_{m,n}$ and $G'_{m,n}$ instead of $G$ and $G'$, where, as usual, $m=m(\pi)$ and $n=\sigma(\pi)$. In general, it is not true to say that $G_{m,n}(G_{m,n}(\pi))=G_{m,n}(\pi)$ for every $\pi\in\mathcal{D}(m,n)$ or that $G'_{m,n}(G'_{m,n}(\pi))=G'_{m,n}(\pi)$ for every $\pi\in\mathcal{C}(m,n)$ which would seem to produce contradictions. This is because, for fixed $m$ and $n$, $G(\pi)$ may not be in the domain of $G$ as the range and the domain are different. This is also true for $G'$. In other words, the expressions  $G_{m,n}(G_{m,n}(\pi))$ and $G'_{m,n}(G'_{m,n}(\pi))$ may not even be meaningful. For example, take the Alladi partition $\pi=1+3$. Then  $G'_{2,4}(1+3)=3$, but now the partition $3$ has one part. That is to say, the expression $G'_{2,4}(G'_{2,4}(1+3))$ has no meaning, and you can't conclude that $G'_{2,4}$ is the identity, arriving at a contradiction. However, we can simply write $G$ and $G'$ for convenience, as $m$ and $n$ will be clear from context, and no ambiguity shall arise. In particular, when we write $G(\pi)$ we always mean $G_{m(\pi),\sigma(\pi)}(\pi)$ and similarly for $G'$. Nonetheless, clarifying remarks are made when appropriate, as is the case for the corollary to Theorem 5. \end{remark}

My guiding intuition is to view the maps $G$ and $G'$ as being dual to each other. The following theorem supports this perspective.

\begin{thm}
 $$|\mathcal{D^*}(m,n)|=|\mathcal{C^*}(m,n)|,$$  and consequently, $$|\mathcal{D^*}(m,n)|=|\mathcal{D}(m,n-2m)|.$$ 
\end{thm}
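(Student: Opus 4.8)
The plan is to pass everything to generating functions and exploit the two facts already recorded above: that, by equation $(2)$, the generating function for $\mathcal{D}$ is the product $p(x)$, and that, by equation $(3)$, $p(x)=(1+xq+x^2q^2)\,p(xq^2)$.

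First I would note that the Alladi partitions are generated by the same product: an odd part $2j-1$ may occur $0$, $1$, or $2$ times, contributing $1$, $xq^{2j-1}$, or $x^2q^{4j-2}$, so that $\sum_{m,n\ge0}|\mathcal{C}(m,n)|x^mq^n=\prod_{j\ge1}(1+xq^{2j-1}+x^2q^{4j-2})=p(x)$; together with equation $(2)$ this is just equation $(1)$. Next I would turn the two grouping maps into functional equations. Put $P^*(x)=\sum_{m,n}|\mathcal{D^*}(m,n)|x^mq^n$ and $Q^*(x)=\sum_{m,n}|\mathcal{C^*}(m,n)|x^mq^n$. Since the three pieces in the image of the Schur grouping map are disjoint (they have distinct part-sums $n$, $n-1$, $n-2$), the Lemma gives $|\mathcal{D}(m,n)|=|\mathcal{D^*}(m,n)|+|\mathcal{D^*}(m-1,n-1)|+|\mathcal{D^*}(m-2,n-2)|$, and the shifts $(m,n)\mapsto(m-i,n-i)$ correspond to multiplication by $x^iq^i$; summing against $x^mq^n$ therefore yields $p(x)=(1+xq+x^2q^2)P^*(x)$. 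The Alladi grouping map $G'$ produces the identical recurrence for $\mathcal{C^*}$, so $p(x)=(1+xq+x^2q^2)Q^*(x)$ as well.

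Now $1+xq+x^2q^2$ has $q$-constant term $1$ and is hence a unit in $\mathbb{Z}[x][[q]]$, so I may divide, obtaining $P^*(x)=Q^*(x)=p(x)/(1+xq+x^2q^2)$; comparing coefficients of $x^mq^n$ gives the first assertion $|\mathcal{D^*}(m,n)|=|\mathcal{C^*}(m,n)|$. This is the precise sense in which $G$ and $G'$ are \emph{dual}: each strips the same factor off the common product $p(x)$. For the ``consequently'' statement I would feed equation $(3)$ into this, writing $P^*(x)=p(x)/(1+xq+x^2q^2)=p(xq^2)=\sum_{m,n}|\mathcal{D}(m,n)|x^mq^{\,n+2m}$; reading off the coefficient of $x^mq^n$ on both sides gives $|\mathcal{D^*}(m,n)|=|\mathcal{D}(m,n-2m)|$. (Alternatively, one could combine the first assertion with the reduction bijection $\psi'$, which gives $|\mathcal{C^*}(m,n)|=|\mathcal{C}(m,n-2m)|$, and then apply equation $(1)$ at $(m,n-2m)$.)

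Since the argument is essentially formal bookkeeping, I do not anticipate a genuine obstacle. The only points demanding care are checking that the three-term images of the grouping maps translate into exactly the factor $1+xq+x^2q^2$ rather than something else, and observing that for each fixed power of $q$ only finitely many $m$ contribute, so that all the series manipulations above are legitimate in $\mathbb{Z}[x][[q]]$.
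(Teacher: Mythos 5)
Your proposal is correct, but it reaches the conclusion by a different mechanism than the paper. Both arguments rest on the same two ingredients --- the Schur and Alladi grouping maps, which give the identical three-term recurrence $|\mathcal{D}(m,n)|=\sum_{i=0}^{2}|\mathcal{D^*}(m-i,n-i)|$ and $|\mathcal{C}(m,n)|=\sum_{i=0}^{2}|\mathcal{C^*}(m-i,n-i)|$, together with Andrews' equations $(1)$ and $(2)$. Where you diverge is in how you extract $|\mathcal{D^*}(m,n)|=|\mathcal{C^*}(m,n)|$ from that common recurrence: the paper runs a strong induction on $m$, equating the two three-term sums via equation $(1)$ and cancelling the $M-1$ and $M-2$ terms by the induction hypothesis, whereas you sum everything into $\mathbb{Z}[x][[q]]$ and divide both instances of $p(x)=(1+xq+x^2q^2)P^*(x)=(1+xq+x^2q^2)Q^*(x)$ by the unit $1+xq+x^2q^2$. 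These are the same argument in two costumes --- the induction is exactly the coefficient-by-coefficient inversion of that unit --- but your version makes the ``duality'' of $G$ and $G'$ visible in one line, $P^*=Q^*=p(x)/(1+xq+x^2q^2)$, at the cost of leaving the purely combinatorial register the paper works in. For the ``consequently'' statement you also take a different route: the paper chains $|\mathcal{D^*}(M,n)|=|\mathcal{C^*}(M,n)|=|\mathcal{C}(M,n-2M)|=|\mathcal{D}(M,n-2M)|$ through the Alladi reduction map $\psi'$ and equation $(1)$, while your primary derivation reads it off from $p(x)/(1+xq+x^2q^2)=p(xq^2)$, i.e.\ equation $(3)$; the alternative you mention in parentheses is precisely the paper's argument. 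Your attention to convergence (finitely many $m$ per fixed power of $q$) and to the disjointness of the three images is exactly the care the formal manipulation requires, so there is no gap.
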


\begin{proof}

 We proceed by strong induction on $m$. This is trivial for $m\leq 1$. Suppose this is true for every $m<M$.  By the lemma and equation $(1)$  $$|\mathcal{D^*}(M,n)|+|\mathcal{D^*}(M-1,n-1)|+|\mathcal{D^*}(M-2,n-2)|=|\mathcal{C^*}(M,n)|+|\mathcal{C^*}(M-1,n-1)|+|\mathcal{C^*}(M-2,n-2)|.$$ 
  Using the induction hypothesis and canceling
$$|\mathcal{D^*}(M,n)|=|\mathcal{C^*}(M,n)|.$$ 

Consequently, from the Alladi reduction map and equation $(1)$ we have $$|\mathcal{D^*}(M,n)|=|\mathcal{C^*}(M,n)|=|\mathcal{C}(M,n-2M)|=|\mathcal{D}(M,n-2M)|.$$
\end{proof}
From here, it can be seen that there \textbf{exists some} bijection sending $\mathcal{D^*}(m,n) \xrightarrow{\sim} \mathcal{D}(m,n-2m)$. I will construct a more refined map, which we call the Schur reduction map, that has that effect. From there, the bijection for the main theorem will be defined recursively.\\

For the remainder of this paper, the parts of a partition are separated by a $+$ sign whenever the context is clear. If there is any ambiguity, or just to be safe, the parts are separated by a comma or a $\oplus$ symbol.\\

We make one last note. Let $[\mathcal{C}(m,n)]_g$ denote the set of Alladi partitions with $g$ ones. On the other side, let $[\mathcal{D}(m,n)]_1$ denote the set of Schur partitions that contain a $1$ or $4$ but not both. Additionally, let $[\mathcal{D}(m,n)]_2$ denote the set of Schur partitions that begin with a $2$ or $1\oplus 4$. Lastly, set $[\mathcal{D}(m,n)]_0=\mathcal{D^*}(m,n)$. From the grouping maps and the previous theorem, it is clear that 
\begin{corollary}
    $$|[\mathcal{D}(m,n)]_g|=|[\mathcal{C}(m,n)]_g|.$$
\end{corollary}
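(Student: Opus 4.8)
The plan is to show that each grouping map respects the decomposition of its domain into the pieces $[\,\cdot\,]_g$, so that $G$ and $G'$ restrict to bijections carrying $[\mathcal{D}(m,n)]_g$ and $[\mathcal{C}(m,n)]_g$ onto shifted copies of $\mathcal{D}^*$ and $\mathcal{C}^*$ of equal cardinality, after which the previous theorem closes the count.

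First I would check that $[\mathcal{D}(m,n)]_0$, $[\mathcal{D}(m,n)]_1$, $[\mathcal{D}(m,n)]_2$ form a disjoint partition of $\mathcal{D}(m,n)$. Since a Schur partition has gaps $\geq 3$, the parts $1$ and $2$ cannot coexist, nor can $2$ and $4$; only $1$ and $4$ may appear together. Hence every $\pi\in\mathcal{D}(m,n)$ falls into exactly one class: it contains none of $1,2,4$ (this is $\mathcal{D}^*(m,n)=[\mathcal{D}(m,n)]_0$); it contains exactly one of $1$ or $4$ (this is $[\mathcal{D}(m,n)]_1$); or it contains a $2$, or else both $1$ and $4$ (this is $[\mathcal{D}(m,n)]_2$). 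On the Alladi side the partition is even more transparent: $[\mathcal{C}(m,n)]_g$ is indexed by the number $g\in\{0,1,2\}$ of ones (an odd part repeats at most twice), and $[\mathcal{C}(m,n)]_0=\mathcal{C}^*(m,n)$.

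Next I would track the statistics $\sigma(\pi)$ and $m(\pi)$ through $G$. The three operations comprising $G$ --- deleting a $1$, deleting a $2$, replacing a $4$ by a $3$ --- lower $\sigma$ by $1,2,1$ and lower $m$ by $1,2,1$, respectively. Thus $G$ decreases both $\sigma$ and $m$ by the same amount $g$, and that amount is $0$ precisely on $[\mathcal{D}(m,n)]_0$, is $1$ precisely on $[\mathcal{D}(m,n)]_1$, and is $2$ precisely on $[\mathcal{D}(m,n)]_2$. Reading off the inverse constructed in the Lemma, the fiber of $G$ over $\mathcal{D}^*(m-g,n-g)$ is exactly $[\mathcal{D}(m,n)]_g$, so $G$ restricts to a bijection $[\mathcal{D}(m,n)]_g \xrightarrow{\sim} \mathcal{D}^*(m-g,n-g)$. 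The identical bookkeeping for $G'$, which only deletes ones, yields a bijection $[\mathcal{C}(m,n)]_g \xrightarrow{\sim} \mathcal{C}^*(m-g,n-g)$.

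Finally, the previous theorem supplies $|\mathcal{D}^*(m-g,n-g)|=|\mathcal{C}^*(m-g,n-g)|$ for each $g$, and chaining the three equalities gives
$$|[\mathcal{D}(m,n)]_g| = |\mathcal{D}^*(m-g,n-g)| = |\mathcal{C}^*(m-g,n-g)| = |[\mathcal{C}(m,n)]_g|.$$
The only genuine work --- the step I expect to be the main obstacle --- is the middle identification: verifying that the combinatorial descriptions of $[\mathcal{D}(m,n)]_1$ and $[\mathcal{D}(m,n)]_2$ (which small parts are present) match the fibers of $G$ over the two shifted copies of $\mathcal{D}^*$ exactly, which amounts to confirming that the inverse operations (inserting a $1$, inserting a $2$, or turning a $3$ into a $4$) always land back inside $\mathcal{D}(m,n)$ and never create a forbidden gap smaller than $3$ or a pair of consecutive multiples of $3$.
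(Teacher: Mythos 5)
Your proposal is correct and is precisely the argument the paper intends when it says the corollary ``directly follows'' from the grouping maps and the previous theorem: you identify $[\mathcal{D}(m,n)]_g$ and $[\mathcal{C}(m,n)]_g$ as the fibers of $G$ and $G'$ over $\mathcal{D}^*(m-g,n-g)$ and $\mathcal{C}^*(m-g,n-g)$ respectively, and then apply Theorem 1. The bookkeeping you carry out (each of the operations deleting a $1$, deleting a $2$, and turning a $4$ into a $3$ lowers $\sigma$ and $m$ by equal amounts, and the small parts $1,2,4$ can only coexist in the single combination $1\oplus 4$) is exactly the verification the paper leaves implicit.
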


A big part of the remainder of this paper will essentially be dedicated to the construction of the Schur reduction map. Before we proceed with this task, we must discuss the important notion of a minimal segment\footnote{K. Alladi prefers and has suggested the term ``fundamental segment" instead. To my mind, both ``minimal segment" and ``fundamental segment" seem good. However, I have settled on ``minimal segment" for this work.} and provide other preliminary definitions. The basic definitions resemble those of Kurşungöz \cite{KURSUNGOZ2021112563}, but they quickly diverge. For example, the notions of singletons and pairs appear in \cite{KURSUNGOZ2021112563} but are not identical to mine as I will use a different pairing convention. Moreover, even singletons are split into free and minimal segment singletons, a distinction not present in \cite{KURSUNGOZ2021112563}. Most importantly, the concept of a minimal segment is new.

\section{Minimal segments and other preliminary definitions}

We will begin with what we call the \underline{generic upper factorization} of a Schur partition $\pi\in\mathcal{D}^*(m,n)$. Here, an integer $x\in \pi$ is part of an \underline{upper pair} if $x+3$ or $x-3$ is also a part with two exceptions. The first is that $x$ is odd, and it is the largest part of an odd-length maximal arithmetic progression with a common difference of $3$. The second exception is that $x$ is even, and it is the smallest part of an odd-length maximal arithmetic progression with a common difference of $3$. In other words, if we have a maximal odd-length arithmetic progression, with a common difference of $3$,  of $1 \pmod{3}$ or $2 \pmod{3}$ parts, then we pair smallest first if the endpoints are odd numbers and we pair largest first if the endpoints are even. We remind the reader that the definition of Schur partitions precludes consecutive multiples of $3$, so they will never be part of pairs. An integer that is not part of an upper pair is called an \underline{upper singleton} in the context of the generic upper factorization. \\

For our purposes, we will need a more refined factorization. To do this, I introduce the crucial concept of a minimal segment and provide two equivalent definitions. One of these definitions will be presented in the appendix.
\begin{definition}
A  \underline{primitive upper minimal segment} is a Schur partition taking on one of the following two forms.\\   

1- The smallest part is an odd singleton $> 3$, which begins an arithmetic progression (length $=1$ is allowed) of odd singletons with a common difference of $4$. This is followed by an arithmetic progression (length $=1$ is allowed) of even singletons, with a common difference of $4$, whose smallest part is $5$ more than the largest odd singleton. Specifically, this form looks like
$$o, o + 4, . . . , o + 4x, o + 4x + 5, o + 4x + 9, . . . , o + 4(x + y) + 5.$$ Here, $x,y\geq 0$. Note that both $x=0$ and $y=0$ are allowed.\\

2-The smallest part is an odd singleton $> 3$, which begins an arithmetic progression (length $=1$ is allowed) of odd singletons with a common difference of $4$. This is followed by a nonempty even-length arithmetic progression of $1 \pmod 3$ or $2\pmod 3$ paired parts, with a common difference of $3$, whose smallest part is $4$ more than the largest odd singleton. Then, an arithmetic progression (length $=1$ is allowed) of even singletons, with a common difference of $4$, whose smallest part is $4$ more than the largest part of a pair.  To illustrate, let $p^* = o + 4x + 1$, then, this form looks like
$$o, o + 4, \dots , o + 4x, [p^*+3,p^*+6] \dots , [p^*+3(p-1),p^*+3p], p^* + 3p+4, \dots, p^* +3p+4y.$$

Notice that in this case, we need $p^*=o + 4x +1 \not\equiv 0 \pmod{3}$ as well as $x\geq0$ and $y,p>0$ with $p$ even. Note also that all of $x=0$, $y=1$, and $p=2$ are allowed. \\
\newpage
Now, define the set of upper minimal segments $\mathcal{U}$ as the smallest set of Schur partitions that contains all primitive upper minimal segments and is closed under the following two operations.\\

Rule 1- (Suffix rule) If $\Sigma\in\mathcal{U}$ is an upper minimal segment with largest part $e$ an even integer such that $e\not\equiv 0 \pmod{3}$, then the union 
of $\Sigma$ with $$[e+3,e+6],. . .,[e+3(p-1),e+3p],e+3p+4,\dots,e+3p+4y$$ is also an upper minimal segment. Here, $p,y>0$, and $p$ is even.\\

Rule 2- (Prefix rule) If $\Sigma\in\mathcal{U}$ is an upper minimal segment with smallest part\\ $o=o' + 4k + 3(p+1)+1\not\equiv 0 \pmod{3}$ with $o'>3$ an odd integer, then the union 
of $\Sigma$ with $$o', o'+4, \dots,o' +4k, [o' +4k +3+1, o' +4k +6+1],\dots, [o' +4k +3(p-1)+1,o' +4k +3p+1]$$ is also an upper minimal segment. Here, $k\geq 0$, $p>0$, and $p$ is even.  \\

Now, an \underline{upper minimal segment} is a subpartition $\Sigma\subseteq\pi$ of a Schur partition $\pi$ such that $\Sigma\in\mathcal{U}$ \textbf{and} the endpoints of $\Sigma$ are singletons in the generic upper factorization of $\pi$. For example,  $28+31+36$ is treated as a pair $[28,31]$ and a singleton $36$ even though $31+36\in\mathcal{U}$. This is because, in the generic upper factorization of $\pi$, one of the endpoints of $31+36$  (namely $31$) is already paired up.

\end{definition}

Notice that an upper minimal segment always begins with an odd singleton and ends with an even singleton. Furthermore, every pair has its smallest part odd. In general, the gap is always $3$ or $4$ except for a possible single exception of a gap of $5$  where an even singleton comes immediately after an odd singleton. In addition, the first and last gaps are always $\geq 4$.\\

We now put it all together. Upper singletons that are part of an upper minimal segment will be called \underline{UMS-singletons}. Upper singletons that are not part of an upper minimal segment will be called \underline{free upper singletons}. A similar distinction can be made with pairs. However, we do not emphasize this as the Schur reduction map, introduced in the next section, will have the same effect on pairs, whether they are free or part of a minimal segment. The \underline{refined upper factorization} will categorize parts as free upper singletons, parts in an upper minimal segment, and parts in an upper pair. We emphasize that in this categorization/factorization, we will look at \textbf{longest}\footnote{see ``Illustration of factorizations" later in this section.} upper minimal segments in the sense that they are not properly contained in other upper minimal segments. This categorization will partition each Schur partition $\pi\in\mathcal{D}^*(m,n)$ into intervals. We will call each of these intervals a refined factor or just a factor if the context is clear.\\

\newpage

\underline{Illustration of prefix and suffix rules:}\\

The primitive upper minimal segment $\Sigma= 35+40$ has  $40\not\equiv0 \pmod{3}$ so we can add the suffix $[43,46]+50$. The resulting minimal segment still has the largest part a non-multiple of $3$, so we can add another suffix  $[53+56]+[59+62]+66+70$. This looks like 
$$35+40\to35+40+[43,46]+50$$ $$\to35+40+[43,46]+50+[53+56]+[59+62]+66+70.$$
 Adding a prefix looks like $$35+40+[43,46]+50+[53+56]+[59+62]+66+70$$
$$\to25+[29+32]+35+40+[43,46]+50+[53+56]+[59+62]+66+70.$$

\bigskip

In an analogous way, we have the \underline{generic lower factorization} of a Schur partition. Here, an integer $x>1$ is part of a \underline{lower pair} if $x+3$ or $x-3$ is also a part greater than $1$, with two exceptions. The first is that $x$ is even, and it is the largest part of an odd-length maximal arithmetic progression of parts greater than $1$ with a common difference of $3$. The second exception is that $x$ is odd, and it is the smallest part of an odd-length maximal arithmetic progression of parts greater than $1$ with a common difference of $3$. In other words, for the generic lower factorization, we adopt the opposite pairing convention. That is, for an odd length maximal arithmetic progression (not including $1$ if it appears as a part) of $1 \pmod{3}$ or $2 \pmod{3}$ parts, we pair smallest first if the endpoints are even numbers, and we pair largest first if the endpoints are odd. An integer that is not part of a pair is called a \underline{lower singleton}. Note that if the part $1$ is present, it is always a lower singleton by definition.\\ 

Now, let $h$ be the map that subtracts $3$ from every part. Define a  \underline{lower minimal segment} as a partition (or sub-partition) whose largest and smallest elements are lower singletons and is of the form $h(\Sigma)$ with $\Sigma$ an upper minimal segment. Lower singletons that are part of a lower minimal segment will be called \underline{LMS-singletons}. Lower singletons that are not part of a lower minimal segment will be called \underline{free lower singletons}. A similar distinction can be made with pairs. However, this will not be emphasized for the same reason as the one given before with upper pairs. The \underline{refined lower factorization} will categorize parts as free lower singletons, parts in a lower minimal segment, and parts in a lower pair.  Again, it is important to emphasize that in this categorization, we will look at \textbf{longest} lower minimal segments in the sense that they are not properly contained in other lower minimal segments.\\

Notice that the definition of lower minimal segments makes them inherit analogous properties and characterizations to those of upper minimal segments. In particular, they also have a prefix and suffix rule. Moreover, if $\Sigma$ is a primitive upper minimal segment, then we call $h(\Sigma)$ a primitive lower minimal segment.\\

\underline{Illustration of factorizations:}\\

The Schur partition $$\pi=12+17+21+25+28+31+34+37+42+60+70+73$$ 
has upper factorization $$\pi=12+17+21+[25+28]+[31+34]+37+42+60+[70+73]$$
refined upper factorization
$$\pi=12+\underline{17+21+[25+28]+[31+34]+37+42}+60+[70+73]$$
lower factorization
$$\pi=12+17+21+25+[28+31]+[34+37]+42+60+[70+73]$$
and refined lower factorization
$$\pi=\underline{12+17+21+25}+[28+31]+[34+37]+42+60+[70+73].$$

In the above, brackets are put around pairs and minimal segments are underlined. In general, generic upper factorizations and generic lower factorizations can differ only in bracketing odd-length maximal arithmetic progression of $1 \pmod 3$ or $2 \pmod{3}  $ parts. The difference between generic and refined is only in underlining the minimal segments. However, the lower and upper minimal segments corresponding to the same partitions could look completely different. Notice also that it is not correct to say that $\pi$ has the refined upper factorization $$12+17+\underline{21+[25+28]+[31+34]+37+42}+60+[70+73]$$ even though $21+[25+28]+[31+34]+37+42$ is an upper minimal segment. This is because it is not a longest upper minimal segment as it is properly contained in $17+21+[25+28]+[31+34]+37+42$, which is also an upper minimal segment. The same applies to refined lower factorizations. \\

 \smallskip
 
Now notice that, for an upper or lower minimal segment, if we replace every odd singleton with the letter $O$,
every even singleton with the letter $E$, every element in a pair with the letter $P$, and every addition with non-commutative multiplication, we obtain a codeword in these three letters. If we set $O=E=S$, we obtain a codeword in two letters. Call the former the codeword and the latter the $S$-codeword. For example,
the upper minimal segment $5+10+14$ has codeword $OE^2$ and $S$-codeword $S^3$. On the other hand, the upper minimal segment $7+[11+14]+18$ has codeword $OP^2E$ and $S$-codeword $SP^2S$. Observe that adding a suffix $[17+20]+24$ to $5+10+14$ corresponds to adding the suffix $P^2E$ to $OE^2$.\\

 In general, every upper (or lower) minimal segment is uniquely retrievable from the quadruple $(W,\omega,\epsilon,s)$ where $W$ is the $S$-codeword,  $\omega$ is the number of odd parts, $\epsilon$ the number of even parts, and $s$ is the starting (smallest) part. \\ 

\section{Bijections}
With the groundwork laid by the last section, the bijections can now be presented.

\begin{thm}(Reduction map on the minimal segments)
Let $\mathcal{U}(\omega,\epsilon,p_1,p_2,n)$ denote the set of upper minimal segments of total sum $n$ with $\omega$ odd parts, $\epsilon$ even parts, $p_1$ parts of a $1\pmod{3}$ upper pair, and $p_2$ parts of a $2\pmod{3}$ upper pair. Let $\mathcal{L}(\omega,\epsilon,p_1,p_2,n)$ denote the set of lower minimal segments of total sum $n$ with $\omega$ odd parts, $\epsilon$ even parts, $p_1$ parts of a $1\pmod{3}$ lower pair, and $p_2$ parts of a $2\pmod{3}$ lower pair. We have a bijection $$\rho:\mathcal{U}(\omega,\epsilon,p_1,p_2,n)\to\mathcal{L}(\omega,\epsilon,p_1,p_2,n-2(\omega+2\epsilon)).$$

In fact, an upper minimal segment with the quadruple $(W,\omega,\epsilon,s)$ is sent to a lower minimal segment with the quadruple $(W,\omega,\epsilon,s-3)$.
 
\end{thm}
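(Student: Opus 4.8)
The plan is to read $\rho$ directly off the parametrization established at the end of Section 3. Since an upper minimal segment is uniquely recovered from its quadruple $(W,\omega,\epsilon,s)$, and likewise a lower minimal segment, I would define $\rho$ to send the upper segment with quadruple $(W,\omega,\epsilon,s)$ to the lower segment with quadruple $(W,\omega,\epsilon,s-3)$, exactly as the statement anticipates. The whole proof then reduces to comparing the two reconstruction procedures, upstairs and downstairs, applied to a common $(W,\omega,\epsilon)$, and checking that this assignment is a well-defined bijection carrying the remaining statistics as claimed.

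First I would make the reconstruction explicit. Writing $n_S$ for the number of $S$'s in $W$, the numbers of odd and even singletons are forced to be $\omega_S=(n_S+\omega-\epsilon)/2$ and $\epsilon_S=(n_S-\omega+\epsilon)/2$, since every pair contributes exactly one odd and one even part (smallest part odd upstairs, even downstairs); crucially these depend only on $(W,\omega,\epsilon)$, so they agree for the upper and the lower segment. The upper reconstruction starts at the odd part $s$ and makes the first $\omega_S$ singletons odd and the rest even, while the lower reconstruction starts at the even part $s-3$ and makes the first $\epsilon_S$ singletons even and the rest odd; in both cases the gap between consecutive parts is dictated by their types, being $4$ at every parity-preserving step and $3$ at every parity flip except the single odd-singleton-to-even-singleton flip, which is $5$. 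Because an upper minimal segment begins with an odd singleton and ends with an even singleton (and the lower one dually), we always have $\omega_S\ge1$ and $\epsilon_S\ge1$; together with $s$ odd and $\ge5$, so that $s-3$ is even and $\ge2$, this is precisely what is needed for the target quadruple to be realized by a genuine lower minimal segment, with the Schur conditions surviving. Well-definedness follows, and since the inverse is visibly $s\mapsto s+3$, the map $\rho$ is a bijection.

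It then remains to track $p_1,p_2$ and the total $n$, and here I would use the concrete realization of $\rho$: pass from the upper segment to the upper segment with the same $W$ but with $\omega_S$ and $\epsilon_S$ interchanged (that is, relocate the odd/even transition), and then subtract $3$ from every part. The engine is a bookkeeping lemma: relocating the transition across a singleton turns an adjacent gap pattern $(4,5)$ into $(5,4)$, and across a pair turns $(4,4)$ into $(5,3)$, so in every case the cumulative gap from the start up to any later pair is unchanged. Hence relocation leaves the actual \emph{value} of every pair fixed and changes the sum by $\pm1$ for each singleton the transition crosses and by $0$ for each pair. Since relocation fixes all pair values and the subsequent subtraction of $3$ lowers every pair by $3$, each pair keeps its residue mod $3$, giving $p_1\mapsto p_1$ and $p_2\mapsto p_2$. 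For the sum, relocation contributes a net $\omega-\epsilon$ and the uniform subtraction contributes $-3(\omega+\epsilon)$, and $(\omega-\epsilon)-3(\omega+\epsilon)=-2(\omega+2\epsilon)$ is exactly the advertised drop; this also re-confirms that the smallest part falls from $s$ to $s-3$.

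The main obstacle is precisely this gap bookkeeping. The point to be careful about is that $\rho$ is emphatically \emph{not} the map $h$ that subtracts $3$ from every part: although the starting part drops by $3$, the interior gaps are genuinely rearranged, because upstairs the odd singletons are listed first while downstairs the even singletons are. This is why the total falls by $2(\omega+2\epsilon)$ rather than by $3(\omega+\epsilon)$. Proving that the rearrangement nevertheless keeps each pair at a value congruent mod $3$ to its counterpart, and moves the sum by exactly the stated amount, is the delicate heart of the argument; everything else is formal once the quadruple parametrization of Section 3 is in hand.
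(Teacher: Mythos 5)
Your proposal is correct and follows essentially the same route as the paper: both define $\rho$ by $(W,\omega,\epsilon,s)\mapsto(W,\omega,\epsilon,s-3)$ and realize it as a relocation of the odd/even singleton transition (your ``bookkeeping lemma'' is the paper's observation that adjusting the extremal singletons by $1$ preserves minimal-segment-hood) followed by the uniform subtraction $h$, with the same computation giving the drop $2(\omega+2\epsilon)$. Your explicit gap accounting is just a more detailed version of the paper's concrete ``subtract $2$/$3$/$4$'' description, so nothing essential differs.
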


\begin{proof}
Notice that the fact that $(W,\omega,\epsilon,s)\to(W,\omega,\epsilon,s-3)$ actually defines the map. Furthermore, the choice of notation makes it plainly clear that it's a bijection that preserves $\epsilon$, $\omega$, and $p_1+p_2=$ the number of occurrences of the letter $P$ in $W$. It remains to show that there exists a lower minimal segment with quadruple $(W,\omega,\epsilon,s-3)$. This is seen as follows. Recall that the map $h$ subtracting three from every part actually gives a bijection sending $(W,\epsilon,\omega,s)\to(W,\omega,\epsilon,s-3)$. In other words, it suffices to show that there exists an upper minimal segment with quadruple $(W,\epsilon,\omega,s)$. Now observe that if an upper minimal segment $\Sigma$ with quadruple $(W,\omega,\epsilon,s)$ has more than one even singleton, then subtracting $1$ from the smallest even singleton gives an upper minimal segment with quadruple $(W,\omega+1,\epsilon-1,s)$. A similar argument can be made with the largest odd singleton. This shows that there exists an upper minimal segment with quadruple $(W,\epsilon,\omega,s)$ and, hence, a lower minimal segment with quadruple $(W,\omega,\epsilon,s-3)$. The above argument is somewhat abstract, so allow me to give a more explicit demonstration below.\\

The map is simply to subtract $3$ from every part if $\omega=\epsilon$. Otherwise, let $\omega'$ and $\epsilon'$ denote the number of odd and even UMS-singletons, respectively. If $\omega>\epsilon$, then subtract $3$ from every part except the largest $\omega'-\epsilon'$ odd singleton parts, in which case you subtract $2$. Likewise, if $\omega<\epsilon$, then subtract $3$ from every part except the smallest $\epsilon'-\omega'$ even singleton parts, in which case you subtract $4$. Notice that the image is a lower minimal segment; every upper singleton is mapped to a lower singleton, and every upper pair is mapped to a lower pair (with the same congruence modulo 3) in an order-preserving way. In other words, an upper minimal segment with the quadruple $(W,\omega,\epsilon,s)$ is sent to a lower minimal segment with the quadruple $(W,\omega,\epsilon,s-3)$. Furthermore, the total weight is decreased by $2(\omega+2\epsilon)$. Indeed, suppose $\omega<\epsilon$ and let $d=\epsilon-\omega$ then we have subtracted $3\omega+3(\epsilon-d)+4d=6\omega+4d=2\omega+4\epsilon$. The case $\omega>\epsilon$ is similar. The map in the other direction, from the lower minimal segments, is analogous. The conclusion follows.
\end{proof}

\underline{Illustration of $\rho$ \#1:}
The map sends $$5+10+14\to2+6+11.$$
Notice that both have $S$-codewords $S^3$. The upper minimal segment on the LHS has codeword $OE^2$, and it is sent to the lower minimal segment on the RHS, which has codeword $E^2O$.\\

\underline{Illustration of $\rho$ \#2:}
The map sends $$5+9+14\to2+7+11.$$
Notice that both have $S$-codewords $S^3$. The upper minimal segment has codeword $O^2E$, and the lower minimal segment has codeword $EO^2$.\\

\underline{Illustration of $\rho$ \#3:}
In general, it can be seen that the upper minimal segment $$o,o+4,\dots,o+4x,o+4x+5,o+4x+9,\dots,o+4(x+y)+5$$ is sent to $$o-3,(o-3)+4,\dots,(o-3)+4y,(o-3)+4y+5,(o-3)+4y+9,\dots,(o-3)+4(x+y)+5.$$
Notice that both have $S$-codeword $S^{x+y+2}$. On the other hand, the upper minimal segment has codeword $O^{x+1}E^{y+1}$, whereas the lower has codeword $E^{y+1}O^{x+1}$.\\

\underline{Illustration of $\rho$ \#4:}

The upper minimal segment $$23+27+[31+34]+[37+40]+44+48+52+56$$ is sent to $$20+24+[28+31]+[34+37]+40+44+49+53.$$
 Notice that both have $S$-codeword $S^2P^4S^4$. On the other hand, the upper minimal segment has codeword $O^2P^4E^4$, whereas the lower  minimal segment has codeword $E^2P^4E^2O^2$.\\
 
Notice also that even though the upper minimal segment was primitive, it was sent to a lower minimal segment that is not primitive. \\

In general, if the upper minimal segment codeword has the first $i$ singleton factors $O$ and the rest $E$, then its image has the last $i$ singleton factors $O$ and the rest $E$.

\begin{thm}(The Schur reduction map)
Let $\mathcal{D^*}(\omega,\epsilon,p_1,p_2,n)$ denote the set of  Schur partitions of $n$ with the condition that every odd part is $\geq 3$ and every even part is $\geq 6$, having $\omega$ odd parts, $\epsilon$ even parts, $p_1$ parts of a $1\pmod{3}$ upper pair, and $p_2$ parts of a $2\pmod{3}$ upper pair. Let $\mathcal{D}(\omega,\epsilon,p_1,p_2,n)$ denote the set of  Schur partitions of $n$ with $\omega$ odd parts, $\epsilon$ even parts, $p_1$ parts of a $1\pmod{3}$ lower pair, and $p_2$ parts of a $2\pmod{3}$ lower pair.
There is a bijection $$\psi:\mathcal{D^*}(\omega,\epsilon,p_1,p_2,n)\to \mathcal{D}(\omega,\epsilon,p_1,p_2,n-2(\omega+2\epsilon))$$

and consequently, a bijection $$\psi:\mathcal{D^*}(m,n)\to \mathcal{D}(m,n-2m).$$ Here $m=m(\pi)$, and $\psi$ is used to denote both functions as the operation does not change but only the domain.
\end{thm}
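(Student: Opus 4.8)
The plan is to build $\psi$ directly from the refined upper factorization of $\pi\in\mathcal{D}^*(\omega,\epsilon,p_1,p_2,n)$, acting on each refined factor separately and then reassembling. On each longest upper minimal segment I apply the map $\rho$ of Theorem 4; from each free upper singleton I subtract $2$ if it is odd and $4$ if it is even; and from each part of an upper pair I subtract $3$. The weight accounting is then immediate: every odd part loses $2$ and every even part loses $4$ in total (a pair contributes one odd and one even part, so $3+3=2+4$, and $\rho$ removes $2(\omega_\Sigma+2\epsilon_\Sigma)$ from a segment), giving a total drop of $2\omega+4\epsilon=2(\omega+2\epsilon)=2m$. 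Since subtracting $3$ fixes residues modulo $3$, and $\rho$ sends each upper pair to a lower pair of the same residue, the counts $p_1,p_2$ should transfer; the coarse statement $\psi:\mathcal{D}^*(m,n)\to\mathcal{D}(m,n-2m)$ then follows by taking the disjoint union of the refined bijections over all tuples with $\omega+2\epsilon=m$.

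First I would record the within-factor facts. Theorem 4 sends each minimal segment to a lower minimal segment with the same $S$-codeword and the same $\omega,\epsilon$, and in particular its smallest and largest parts both drop by exactly $3$: this is visible from the prescription $s\mapsto s-3$ together with the fact that a segment begins with an odd singleton and ends with an even singleton, so neither endpoint is among the exceptional singletons that drop by $2$ or $4$. A pair keeps its internal gap of $3$, and singletons stay singletons. The substance is the \emph{boundary analysis}: for consecutive factors $F_i,F_{i+1}$ I must show that the new gap between the largest part of $F_i$ and the smallest part of $F_{i+1}$ is still $\ge 3$ with no consecutive multiples of $3$, and that the refined lower factorization of $\psi(\pi)$ reproduces exactly the transported pairs and singletons of the $F_i$ (so that $\omega,\epsilon,p_1,p_2$ and the classification into free singletons, pairs, and longest segments are preserved). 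Granting this, $\psi$ lands in the claimed set, and I would finish by constructing the symmetric inverse—read the refined lower factorization, apply $\rho^{-1}$ to lower minimal segments, add $3$ to pair parts, and add $2$ (resp. $4$) to free odd (resp. even) lower singletons—whose well-definedness rests on the same boundary analysis. (Alternatively, injectivity together with Theorem 3 forces bijectivity on each finite fiber.)

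The hard part will be the boundary analysis, and the key is a parity dichotomy. Writing the new gap as (old gap) $+\,\delta_a-\delta_b$, where $\delta_a,\delta_b\in\{2,3,4\}$ are the amounts removed from the two parts flanking the boundary, a gap can shrink (by at most $2$) only when $F_i$ ends in a free odd singleton or when $F_{i+1}$ begins with a free even singleton. But an odd–even boundary has odd gap, so it is $\ge 3$ and never an even number; a gap of exactly $3$ is impossible because two parts differing by $3$ are either paired or are consecutive multiples of $3$; and the dangerous short odd–even chains are precisely those the minimal-segment definition absorbs — for instance an odd singleton $>3$ followed by an even singleton at gap $5$ is a primitive upper minimal segment (form 1 with $x=y=0$) and therefore never occurs as two free singletons. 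Likewise odd–odd and even–even boundaries have even gaps $\ge 4$, so a single unit of shrinkage is harmless. Thus every boundary that could fall below $3$ is either forbidden by the Schur and pairing conditions or has been pulled inside a minimal segment, where Theorem 4's nonuniform $-2/-3/-4$ rule keeps the interior valid while holding both endpoints at $-3$.

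I expect the most delicate and lengthy portion to be verifying that the lower pairing convention (opposite to the upper one) reproduces the transported structure even when reduction fuses a transported pair and an adjacent transported singleton into a single maximal arithmetic progression of difference $3$. In such a case the image is still Schur (the parts are non-multiples of $3$, so no consecutive multiples arise), and the lower convention — pairing smallest-first when the endpoints are even, largest-first when odd — re-splits the fused progression into exactly the same pairs and leftover singleton that the transported factors prescribe. Checking this agreement across all factor-type adjacencies, and confirming that a transported singleton is never accidentally paired (nor a transported paired part freed), is what secures the preservation of $p_1,p_2$ and the longest-segment property, and hence the well-definedness and invertibility of $\psi$.
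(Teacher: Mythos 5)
Your construction of $\psi$ is exactly the paper's: apply $\rho$ to longest upper minimal segments, subtract $3$ from paired parts, $2$ from free odd singletons, and $4$ from free even singletons, with the symmetric lift as inverse, and the same weight count $2\omega+4\epsilon=2m$. The only difference is that you explicitly flag and sketch the boundary/refactorization analysis that the paper compresses into ``its image is a Schur partition'' and ``it's readily seen that these two operations are inverses,'' so your proposal is essentially the same proof with the omitted verification outlined.
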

\begin{proof}
I will adopt a method with similarities to Kurşungöz \cite{KURSUNGOZ2021112563}. Define the reduction map $\psi$ as follows. Let $\pi\in \mathcal{D^*}(m,n)$ (or $\pi\in \mathcal{D^*}(\omega,\epsilon,p_1,p_2,n)$). Then, by writing the refined upper factorization of $\pi$, the parts of $\pi$ are split into the following categories:

\begin{itemize}
    \item Parts in an upper $1 \pmod{3}$ pair
  \item Parts in an upper $2 \pmod {3}$ pair
  \item Parts in an upper minimal segment
  \item Free odd upper singletons 
  \item Free even upper singletons 
\end{itemize}
Paired parts are decreased by 3, free even singletons are decreased by 4, and free odd singletons are decreased by 2. In particular, for pairs, this looks like $$[3k+4,3k+7]\to [3k+1,3k+4] \;\;\;\;\text{and}\;\;\;\;[3k+5,3k+8] \to [3k+2,3k+5].$$ The map on the upper minimal segments is $\rho$. Although this follows from the refined upper factorization, it is important to emphasize that $\rho$ is applied on \underline{longest upper minimal segments} in the sense that they are not properly contained in any other upper minimal segments.

Note that this operation preserves $\epsilon$, $\omega$, $p_1$, and $p_2$. In addition, it decreases the total sum by $2m=2(\omega+2\epsilon)$. Furthermore, its image is a Schur partition.\\

  In the same manner,  the process will now be shown to be reversible. Indeed, as before, for $\pi\in\mathcal{D}(m,n-2m)$ (or $\pi\in\mathcal{D}(\omega,\epsilon,p_1,p_2,n-2(\omega+2\epsilon))$)  categorize parts of $\pi$ into the following categories:
\begin{itemize}
  \item Parts in a lower $1 \pmod 3$ pair \underline{with the smallest part bigger than 1}
  \item Parts in a lower $2 \pmod 3$ pair
  \item Parts in a lower minimal segment
  \item Free odd lower singletons 
  \item Free even lower singletons

\end{itemize}
 
Paired parts are increased by $3$, free even singletons are increased by $4$, and free odd singletons are increased by $2$. The map on the lower minimal segments is $\rho^{-1}$. As before, the previous map is applied on \underline{longest lower minimal segments} in the sense that they are not properly contained in other lower minimal segments. I call this map the Schur lift.\\

It's readily seen that these two operations are inverses of each other. 
\end{proof}

Observe that if $\{f_i\}$ is the refined upper factorization of $\pi$, then $\{\psi(f_i)\}$ is the refined lower factorization of $\psi(\pi)$. In addition, if $\pi_i$ is a free upper singleton, then $\psi(\pi_i)$ is a free lower singleton of the same parity. Moreover, if $\pi_i$ is a member of an upper pair, then $\psi(\pi_i)$ is a member of a lower pair with the same congruence class $\pmod{3}$. Finally, if $\pi_i$ is a UMS-singleton, then $\psi(\pi_i)$ is an LMS-singleton.\\

\underline{Illustration of $\psi$: }

The Schur partition \smallskip
$$\pi=15+20+35+40+43+46+50+55+60+73+77+82+85+90+94$$ \smallskip
has the following refined upper factorization. \smallskip
$$\pi=\underline{15+20}+\underline{35+40+[43+46]+50}+\underline{55+60}+73+77+[82+85]+90+94$$ \smallskip
This corresponds to the refined lower factorization of $\psi(\pi)$ shown below. \smallskip
$$\psi(\pi)=\underline{12+17}+\underline{32+36+[40+43]+47}+\underline{52+57}+71+75+[79+82]+86+90$$ \smallskip
What must be noted is the refined upper factorization of $\psi(\pi)$ is \smallskip
$$\psi(\pi)=12+17+32+36+[40+43]+\underline{47+52}+57+\underline{71+75+[79+82]+86+90}$$ \smallskip
which looks very different than both refined factorizations of $\pi$. However, this is what we must use to compute $\psi^2(\pi)$.  \smallskip
$$\psi^2(\pi)=8+15+28+32+[37+40]+\underline{44+49}+55+\underline{68+72+[76+79]+83+87}$$ \smallskip
In the above, the refined lower factorization is used to express $\psi^2(\pi)$. This illustrates how difficult it is to predict what repeated applications of $\psi$ will look like.\\

Now, the bijection\footnote{This bijection can be thought of as dual to $\phi'$.} on the Schur partitions that corresponds to equation  $(3)$ can finally be presented.

\begin{thm}
There is a bijection$$\phi:\mathcal{D}(m,n)\to \mathcal{D}(m,n-2m)\cup\mathcal{D}(m-1,n-2m+1)\cup\mathcal{D}(m-2,n-2m+2).$$
In particular,
$$D(m,n)=D(m,n-2m)+D(m-1,n-2m+1)+D(m-2,n-2m+2).$$

\end{thm}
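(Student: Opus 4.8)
The plan is to define $\phi$ by composing the two maps already built, exactly mirroring the factorization $\phi' = \psi' G'$ on the Alladi side and the recursion $p(x) = (1+xq+x^2q^2)p(xq^2)$. First I would apply the Schur grouping map $G$ from the Lemma, which is a bijection
$$G:\mathcal{D}(m,n)\to \mathcal{D^*}(m,n)\cup\mathcal{D^*}(m-1,n-1)\cup\mathcal{D^*}(m-2,n-2),$$
splitting the domain into three pieces according to whether a small part ($1$, $2$, or $1\oplus 4$) was removed. Then, on each of the three target components, I would apply the Schur reduction map $\psi$ from Theorem 4, which restricts to bijections
$$\psi:\mathcal{D^*}(m,n)\to\mathcal{D}(m,n-2m),\quad
\psi:\mathcal{D^*}(m-1,n-1)\to\mathcal{D}(m-1,n-2m+1),$$
$$\psi:\mathcal{D^*}(m-2,n-2)\to\mathcal{D}(m-2,n-2m+2),$$
where I have used that $\psi$ lowers the sum by twice the number of $m$-statistic of its input, namely $2m$, $2(m-1)$, and $2(m-2)$ respectively. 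Defining $\phi=\psi G$ piecewise in this way immediately lands in the claimed union, so the plan is essentially to verify that the arithmetic of the three index shifts is consistent.

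The verification I would carry out is purely bookkeeping on the $m$-statistic and the sum. The grouping map $G$ sends a partition of sum $n$ and statistic $m$ into one of sum $n$, $n-1$, or $n-2$, with statistics $m$, $m-1$, $m-2$; this matches because removing a $1$ drops one odd part (so $m$ drops by $1$ and the sum by $1$), removing a $2$ drops one even part contributing $2$ to $m$ (so $m$ drops by $2$ and the sum by $2$), and the $1\oplus 4 \to$ nothing / $3\to 4$ replacements are handled so as to realize the same shifts. On the second component, for instance, an element of $\mathcal{D^*}(m-1,n-1)$ has statistic $m-1$, so $\psi$ lowers its sum by $2(m-1)$, giving sum $(n-1)-2(m-1)=n-2m+1$, exactly the index of the middle term. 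The analogous computation for the third component gives $(n-2)-2(m-2)=n-2m+2$. Since both $G$ and (the appropriate restriction of) $\psi$ are bijections onto disjoint components whose union is the stated codomain, the composite $\phi$ is a bijection, and taking cardinalities yields the identity $D(m,n)=D(m,n-2m)+D(m-1,n-2m+1)+D(m-2,n-2m+2)$.

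The only genuine subtlety, and what I expect to be the main obstacle, is making sure that the three codomain pieces of $\phi$ are genuinely disjoint and that $\psi$ is applied with the correct statistic on each piece. The Remark after the Lemma warns precisely about this: the meaning of $G$ (and hence of $\psi$) depends on the domain $\mathcal{D}(m,n)$, so the $m$ used inside $\psi$ must be the statistic of the intermediate partition produced by $G$, not the original $m$. Concretely, when I apply $\psi$ to the middle component I must use its statistic $m-1$ (reducing the sum by $2(m-1)$), and the three resulting sums $n-2m$, $n-2m+1$, $n-2m+2$ are distinct, which guarantees the images are disjoint. Once this index-tracking is stated carefully, the bijectivity of $\phi$ follows formally from that of $G$ and $\psi$, and no further construction is needed.
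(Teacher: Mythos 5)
Your proposal is correct and is essentially identical to the paper's own proof: the paper also defines $\phi(\pi)=\psi(G(\pi))$, observes it is a bijection as a composition of bijections, and notes that the three codomain components are disjoint because their elements have different sums. Your extra bookkeeping on the $m$-statistic and the index shifts $(n-1)-2(m-1)=n-2m+1$ and $(n-2)-2(m-2)=n-2m+2$ is a correct and slightly more explicit rendering of the same argument.
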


\begin{proof}
It's clear that $\phi$, defined by $\phi(\pi)=\psi(G(\pi))$, is a bijection as it is a composition of bijections. Furthermore, the intersection of any two of the three sets on the right-hand side is empty since partitions from different sets have different sizes. So we have $$D(m,n)=D(m,n-2m)+D(m-1,n-2m+1)+D(m-2,n-2m+2)$$ as claimed.
\end{proof}
\bigskip
At last, the main result is now easy to prove. 
\bigskip

\begin{thm}
There is a bijection $$f:\mathcal{D}(m,n)\to \mathcal{C}(m,n).$$
\end{thm}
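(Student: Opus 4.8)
The plan is to define $f$ recursively on $n$ (or equivalently on $m+n$), mirroring the recursion on the Alladi side. The key observation is that Theorem~4 gives a bijection $\phi$ for the Schur partitions that is the exact analogue of the map $\phi'$ supplied in Section~2 for the Alladi partitions, and both satisfy the \emph{same} three-term recursion. Concretely, $\phi$ decomposes $\mathcal{D}(m,n)$ as the disjoint union $\mathcal{D}(m,n-2m)\cup\mathcal{D}(m-1,n-2m+1)\cup\mathcal{D}(m-2,n-2m+2)$, while $\phi'=\psi'G'$ decomposes $\mathcal{C}(m,n)$ as $\mathcal{C}(m,n-2m)\cup\mathcal{C}(m-1,n-2m+1)\cup\mathcal{C}(m-2,n-2m+2)$. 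Since the three index pairs appearing on each side are identical, one can build $f$ by matching the two decompositions piece by piece and recursing.

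First I would set up the base cases: for small values (say $n\le 1$, or all $(m,n)$ with $m=0$) both sets are either empty or singletons, so $f$ is forced and trivially a bijection there. For the inductive step, I would assume $f$ has been defined as a bijection $\mathcal{D}(m',n')\to\mathcal{C}(m',n')$ for every $(m',n')$ with $n'<n$ (and for $n'=n$ with $m'<m$ if I order by $m+n$), and then define
$$f = (\phi')^{-1}\circ (f_1\sqcup f_2\sqcup f_3)\circ \phi,$$
where $f_1,f_2,f_3$ are the already-constructed bijections on the three smaller index pairs $(m,n-2m)$, $(m-1,n-2m+1)$, $(m-2,n-2m+2)$. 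Each of these three targets has strictly smaller second coordinate $n-2m+j<n$ (using $m\ge 1$ in the nontrivial cases), so the induction is well-founded. Reading this composition left to right: apply $\phi$ to land in one of the three smaller Schur sets, apply the inductive $f$ to cross over to the corresponding smaller Alladi set, then apply $(\phi')^{-1}$ to climb back up to $\mathcal{C}(m,n)$.

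Since $f$ is assembled as a composition of bijections — $\phi$ by Theorem~4, the disjoint union of inductive bijections, and $(\phi')^{-1}$ from Section~2 — it is itself a bijection, completing the induction. The step I expect to require the most care is verifying well-foundedness of the recursion, i.e.\ that each invocation of $f$ on the right-hand side is on a strictly smaller parameter than the one being defined. For $m\ge 1$ this is immediate since $n-2m+j<n$ for $j\in\{0,1,2\}$; the genuinely delicate points are the boundary cases $m=0$ and $m=1$, where some of the three summand sets must be empty for the recursion to terminate correctly (e.g.\ when $m<2$ the set $\mathcal{D}(m-2,\cdot)$ should be interpreted as empty), and one must check these degenerate sets match on both the Schur and Alladi sides so that the matching of the three pieces remains a valid bijection. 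Once that bookkeeping is settled, everything else is a formal consequence of the symmetric recursions established in Theorems~4 and the displayed bijection $\phi'$.
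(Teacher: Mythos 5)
Your proposal is correct and is essentially the paper's own argument: the paper defines $f(\pi)=f(g(\pi))\oplus r(f(\phi(\pi)))$ with base cases $f(0)=0$, $f(1)=1$, $f(2)=1\oplus 1$, which is exactly your $(\phi')^{-1}\circ f\circ\phi$ with $(\phi')^{-1}$ written out explicitly (add $2$ to every part, then adjoin the appropriate number of $1$'s according to which of the three pieces $\phi(\pi)$ landed in), and it likewise verifies bijectivity by strong induction on $n$. The boundary bookkeeping you flag is handled in the paper by the explicit base cases and by the observation that the three target sets are disjoint since their elements have distinct sizes.
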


\begin{proof}
Let $r$ be the map that adds $2$ to every part. Let $g(\pi)=\sigma(\pi)-\sigma(G(\pi))$. The bijection is defined inductively by $f(\pi)=f(g(\pi))\oplus r(f(\phi(\pi)))$
with the initial conditions (base case) $f(0)=0$, $f(1)=1$, and $f(2)=1\oplus 1$. Here, we identify $0$ with the empty partition. Furthermore, we note that all of these maps are defined so as to send the empty partition to the empty partition. In particular, $r(0)=0$. First, observe that $\sigma(f(\pi))=\sigma(\pi)$ and $m(f(\pi))=m(\pi)$.
To prove the injectivity and surjectivity of $f$, we proceed by strong induction on $n$ (the integer being partitioned). \\
\newpage
Indeed, suppose this is injective for all partitions with $\sigma(\pi)\leq N$. If  
 $\sigma(\pi_1)=\sigma(\pi_2)=N+1$ and $f(\pi_1)=f(\pi_2)$ then 
  $$f(\pi_1)=f(\pi_2)\implies G'(f(\pi_1))=G'(f(\pi_2))\implies r(f(\phi(\pi_1)))=r(f(\phi(\pi_2)))$$ $$\implies f(\phi(\pi_1))=f(\phi(\pi_2))\xRightarrow{\text{By induction}}\phi(\pi_1)=\phi(\pi_2)\xRightarrow{\text{By Theorem 4}} \pi_1=\pi_2.$$

Here, $G'(f(\pi))=r(f(\phi(\pi)))$ was used. Now, in reality, injectivity proves bijectivity, as Andrews already established $|\mathcal{D}(m,n)|=| \mathcal{C}(m,n)|$ by generating functions. However, I will show surjectivity anyway.\\

We again proceed by strong induction on $n$ (the integer being partitioned). Suppose this is surjective for all partitions with $\sigma(\pi)\leq N$. If $\sigma(\pi)=N+1$,  then it can be seen that $\pi$ is the image of $\phi^{-1}(\alpha)=G^{-1}_{m(\pi),N+1}(\psi^{-1}(\alpha))$, where 
$\alpha\in f^{-1}(\{\phi'(\pi)\})$. The induction hypothesis guarantees that $f^{-1}(\{\phi'(\pi)\})$ is nonempty.  This completes the proof. 
\end{proof}
\begin{corollary}
We have 
$$f(\phi(\pi))=\phi'(f(\pi))\quad \text{   and } \quad f([\mathcal{D}(m,n)]_g)=[\mathcal{C}(m,n)]_g.$$

Moreover, the following diagrams commute.

\[
\begin{tikzcd}
\mathcal{D^*}(m,n)\arrow[d, "f"]  \arrow[r,"\psi"] &\mathcal{D}(m,n-2m)\arrow[d, "f"]    \\
 \mathcal{C^*}(m,n)\arrow[r, "\psi'"] &\mathcal{C}(m,n-2m)  
\end{tikzcd}
\begin{tikzcd}
\mathcal{D}(m,n)\arrow[d, "f"]  \arrow[r,"G"] &\mathcal{D^*}(m,n)\cup\mathcal{D^*}(m-1,n-1)\cup\mathcal{D^*}(m-2,n-2)\arrow[d, "f"]    \\
 \mathcal{C}(m,n)\arrow[r, "G'"] &\mathcal{C^*}(m,n)\cup\mathcal{C^*}(m-1,n-1)\cup\mathcal{C^*}(m-2,n-2)  
\end{tikzcd}
\]
\end{corollary}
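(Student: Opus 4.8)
The plan is to derive everything from the recursive definition $f(\pi)=f(g(\pi))\oplus r(f(\phi(\pi)))$ together with two elementary facts: the relation $G'(f(\pi))=r(f(\phi(\pi)))$ already isolated in the proof of Theorem 5, and the triviality $\psi'\circ r=\mathrm{id}$ (adding $2$ to every part and then subtracting $2$ does nothing). First I would record what the recursion does on the special set $\mathcal{D}^*$. If $\pi^*\in\mathcal{D}^*(m,n)$ then $G(\pi^*)=\pi^*$, so $g(\pi^*)=\sigma(\pi^*)-\sigma(G(\pi^*))=0$ and $\phi(\pi^*)=\psi(\pi^*)$; hence the defining formula collapses to $f(\pi^*)=r(f(\psi(\pi^*)))$. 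Applying $\psi'$ to both sides and using $\psi'\circ r=\mathrm{id}$ gives $\psi'(f(\pi^*))=f(\psi(\pi^*))$, which is exactly commutativity of the first diagram. Along the way one checks that $r(f(\psi(\pi^*)))$ has all parts odd and $\geq 3$, so it lies in $\mathcal{C}^*$ and the maps are composable.

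Next I would treat the $G$-diagram. For arbitrary $\pi\in\mathcal{D}(m,n)$ the partition $G(\pi)$ lies in $\mathcal{D}^*$, so the previous step gives $f(G(\pi))=r(f(\psi(G(\pi))))=r(f(\phi(\pi)))$, while the relation $G'(f(\pi))=r(f(\phi(\pi)))$ — immediate from the recursion, since the only parts equal to $1$ in $f(\pi)$ come from the $f(g(\pi))$ summand and $r(f(\phi(\pi)))$ has all parts $\geq 3$ — produces the same right-hand side. Hence $f(G(\pi))=G'(f(\pi))$. Composing the two diagrams then yields $f(\phi(\pi))=f(\psi(G(\pi)))=\psi'(f(G(\pi)))=\psi'(G'(f(\pi)))=\phi'(f(\pi))$, proving the first displayed identity.

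For the class statement $f([\mathcal{D}(m,n)]_g)=[\mathcal{C}(m,n)]_g$, the key observation is that the number of parts equal to $1$ in $f(\pi)$ equals $g(\pi)$ (again because those $1$'s come only from $f(g(\pi))$, which contributes $0$, $1$, or $2$ ones), so $f(\pi)\in[\mathcal{C}(m,n)]_{g(\pi)}$. It then remains to match the description of $[\mathcal{D}(m,n)]_g$ with the value of $g(\pi)$: since $G$ deletes parts $1,2$ and replaces $4$ by $3$, and a Schur partition can contain neither two of $\{1,2\}$ nor both $2$ and $4$ (by the gap condition), one reads off that $g(\pi)=0$ exactly on $\mathcal{D}^*=[\mathcal{D}]_0$, that $g(\pi)=1$ exactly when $\pi$ has a $1$ or a $4$ but not both (i.e. $[\mathcal{D}]_1$), and that $g(\pi)=2$ exactly when $\pi$ has a $2$ or both a $1$ and a $4$ (i.e. $[\mathcal{D}]_2$). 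Thus $f$ carries $[\mathcal{D}(m,n)]_g$ into $[\mathcal{C}(m,n)]_g$ for each $g$.

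Finally, since $f$ is a bijection by Theorem 5 and both families partition their domains into the three disjoint classes $g=0,1,2$, the three inclusions $f([\mathcal{D}(m,n)]_g)\subseteq[\mathcal{C}(m,n)]_g$ are forced to be equalities. I expect the only genuinely fiddly step to be this last bookkeeping — verifying that the three combinatorial descriptions of the sets $[\mathcal{D}(m,n)]_g$ correspond precisely to the three values $g(\pi)\in\{0,1,2\}$, which rests entirely on using the Schur gap condition to rule out incompatible configurations of small parts. Everything else is a direct unwinding of the recursion and the identity $\psi'\circ r=\mathrm{id}$.
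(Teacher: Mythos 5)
Your proposal is correct and follows essentially the same route as the paper: establish $\psi'(f(\pi^*))=f(\psi(\pi^*))$ on $\mathcal{D}^*$ by collapsing the recursion (since $g(\pi^*)=0$ and $G(\pi^*)=\pi^*$), deduce $f(G(\pi))=G'(f(\pi))$ by applying this to $\pi^*=G(\pi)$, and compose to get $f(\phi(\pi))=\phi'(f(\pi))$. The only difference is that you spell out the bookkeeping for $f([\mathcal{D}(m,n)]_g)=[\mathcal{C}(m,n)]_g$ (counting the $1$'s contributed by $f(g(\pi))$ and matching $g(\pi)\in\{0,1,2\}$ to the three classes), which the paper leaves as ``easily follow''; this is a correct and welcome elaboration, not a different argument.
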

\begin{proof}
 If $\pi^*\in\mathcal{D^*}(m,n)$ then $g(\pi^*)=0$ and $G(\pi^*)=\pi^*$. It follows $$f(\pi^*)=r(f(\phi(\pi^*)))=r(f(\psi(G(\pi^*))))=r(f(\psi(\pi^*)))$$ 
$$\implies\psi'(f(\pi^*))= f(\psi(\pi^*)).$$

Likewise, for $\pi\in \mathcal{D}(m,n)$, denote $\pi^*=G(\pi)$ then using the previous argument and noting $\phi(\pi)=\psi(\pi^*)$ we have
$$f(G(\pi))=f(\pi^*)=r(f(\psi(\pi^*)))=r(f(\phi(\pi)))=G'(f(\pi)).$$ The first two claims easily follow.
\end{proof}
\begin{remark}
Although it is not written down, it may look like the proof used $G(\pi)=G(\pi^*)$, which seems to contradict that $G$ is a bijection. In fact, there is no contradiction as the $G$ on the left-hand side may not be the same function as the $G$ on the right-hand side, as per the remark after the lemma concerning the Schur grouping map. So the previous equation, written more explicitly, is actually $G_{m,n}(\pi)=G_{m-g(\pi),n-g(\pi)}(\pi^*)$. It is important to note that the procedure of $G^{-1}$ \textbf{depends on the domain of G}. For example, even though for $\pi=1+4$ we have $G_{3,5}(\pi)=3=\pi^*=G_{1,3}(\pi^*)$, it is also true that $G_{3,5}^{-1}(3)=1+4$ and $G_{1,3}^{-1}(3)=3$.
\end{remark}

The above properties fit very nicely with the results at the beginning of this manuscript. In addition, they make computing the bijection a bit more convenient. This is illustrated below.

\[
\begin{tikzcd}
3+7+14\arrow[d, "f"]  \arrow[r,"\psi"] &1+5+10\arrow[d, "f"]  \arrow[r,"G"] &5+10\arrow[d, "f"]  \arrow[r,"\psi"] & 2+7 \arrow[d, "f"] \arrow[r, "G"] & 7\arrow[d, "f"]  \\
 3+5+5+11 \arrow[r, "\psi'"] &1+3+3+9 \arrow[r, "G'"] & 3+3+9 \arrow[r, "\psi'"] & 1+1+7 \arrow[r, "G'"] & 7 
\end{tikzcd}
\]
Notice that in the above computation we benefited from the convenient fact that $f(\pi)=\pi$ when $\pi\in \mathcal{D}\cap \mathcal{C}$, where, $\mathcal{D}$ is the set of all Schur partitions and $\mathcal{C}$ is the set of all Alladi partitions. 

\section{The Alladi-Schur polynomials}

This last section gives a treatment of the Alladi-Schur polynomials.\smallskip

 Let $\mathcal{D}_{N}(m,n)$ be the subset of $\mathcal{D}(m,n)$ with the added restriction that all the parts are less than or equal to $N$. Andrews introduced, see \cite{Andrews2019} and \cite{10.1007/978-3-319-68376-8_3}, the Alladi-Schur polynomials 
$$d_N(x)=\sum_{n,m\geq 0} D_{N}(m,n)x^mq^n,$$
where $|\mathcal{D}_{N}(m,n)|=D_{N}(m,n)$. In fact, the way he proved equation $(2)$ was by establishing some recursive relations for the Alladi-Schur polynomials \cite{Andrews2019}. In \cite{10.1007/978-3-319-68376-8_3}, he extended these recursive relations to what follows.
\begin{equation}
d_{2N}(x)=\lambda(x)d_{2N-3}(xq^2),
\end{equation}
\begin{equation}
    d_{2N-1}(x)=\lambda(x)(d_{2N-4}(xq^2)
+xq^{2N-1}(1-xq)d_{2N-7}(xq^2)),
\end{equation}
    where $N\geq3$, $d_{-1}(x)=1$, and following Andrews' notation, we denote $\lambda(x)=(1+xq+x^2q^2)$.\\

The first relation easily follows from what we have so far. In fact, it is derived bijectively in what follows.
\begin{thm}
 There is a bijection $$\phi_{2N}:\mathcal{D}_{2N}(m,n)\to \mathcal{D}_{2N-3}(m,n-2m)\cup\mathcal{D}_{2N-3}(m-1,n-2m+1)\cup\mathcal{D}_{2N-3}(m-2,n-2m+2).$$
In particular,
$$D_{2N}(m,n)=D_{2N-3}(m,n-2m)+D_{2N-3}(m-1,n-2m+1)+D_{2N-3}(m-2,n-2m+2),$$ and 
$$d_{2N}(x)=\lambda(x)d_{2N-3}(xq^2).$$

\end{thm}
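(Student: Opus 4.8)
The plan is to exhibit $\phi_{2N}$ as nothing more than the restriction of the bijection $\phi=\psi\circ G$ from Theorem 4 to the subset $\mathcal{D}_{2N}(m,n)\subseteq\mathcal{D}(m,n)$, and then to verify that both $\phi$ and $\phi^{-1}$ respect the bound on the largest part. Since Theorem 4 already supplies that $\phi$ is a bijection onto $\mathcal{D}(m,n-2m)\cup\mathcal{D}(m-1,n-2m+1)\cup\mathcal{D}(m-2,n-2m+2)$ with the three target sets pairwise disjoint (they have distinct total sums), everything reduces to the claim that $\pi$ has largest part $\leq 2N$ \emph{if and only if} $\phi(\pi)$ has largest part $\leq 2N-3$. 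Granting this, $\phi_{2N}$ is a bijection onto the bounded target, the cardinality identity follows exactly as in Theorem 4, and the polynomial identity follows by a routine reindexing.

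First I would dispose of the grouping map $G$. Because $G$ only deletes parts smaller than $3$ and replaces any $4$ by a $3$, it never increases a part; hence $\pi\in\mathcal{D}_{2N}$ forces $G(\pi)$ to have largest part $\leq 2N$. Conversely $G^{-1}$ only inserts a $1$ or a $2$ and possibly turns a $3$ into a $4$, so (using $2N\geq 6$) it preserves the bound $\leq 2N$. Thus it suffices to track the largest part through the Schur reduction map $\psi$. The crux is the following claim: if $L$ is the largest part of a partition in the domain of $\psi$, then $\psi$ sends $L$ to a part that is $\leq L-2$ when $L$ is odd and $\leq L-3$ when $L$ is even. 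I would establish this using the case analysis from the refined upper factorization: a free odd singleton drops by exactly $2$; a free even singleton drops by $4$; if $L$ lies in a pair it must be the larger part and drops by $3$; and if $L$ lies in an upper minimal segment it is the largest part of that segment, which, since a minimal segment always ends in an even singleton and always begins with an odd singleton (so $\omega'\geq 1$), is never among the smallest $\epsilon'-\omega'$ even singletons singled out in the description of $\rho$, and therefore also drops by exactly $3$. Combining the claim with $L\leq 2N$ gives the bound: $L$ odd forces $L\leq 2N-1$ and image $\leq 2N-3$, while $L$ even forces image $\leq 2N-4\leq 2N-3$. Running the same analysis through the Schur lift $\psi^{-1}$ shows the largest part increases by $2$, $3$, or $4$ in the corresponding cases, and that starting from largest part $\leq 2N-3$ yields largest part $\leq 2N$; this gives the reverse inclusion and shows $\phi_{2N}$ is onto.

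I expect the main obstacle to be the minimal-segment case of the claim, namely confirming that the largest part of an upper minimal segment always decreases by exactly $3$ under $\rho$ (and, dually, that the largest part of a lower minimal segment increases by exactly $3$ under $\rho^{-1}$), which is precisely where the internal structure of $\rho$ and the fact that a minimal segment begins with an odd singleton are needed. The remainder is bookkeeping, and the tightness of the value $2N-3$, rather than $2N-2$ or $2N-4$, comes exactly from odd free singletons dropping by only $2$ while every even largest part drops by at least $3$.

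Finally, to pass to generating functions I would multiply the cardinality recursion by $x^mq^n$ and sum over $m,n\geq 0$. Reindexing the first summand by the substitution removing $2m$ from the sum gives $d_{2N-3}(xq^2)$; the analogous reindexing of the second and third summands extracts factors $xq$ and $x^2q^2$, producing $xq\,d_{2N-3}(xq^2)$ and $x^2q^2\,d_{2N-3}(xq^2)$. Adding the three contributions yields $d_{2N}(x)=(1+xq+x^2q^2)\,d_{2N-3}(xq^2)=\lambda(x)\,d_{2N-3}(xq^2)$, as desired.
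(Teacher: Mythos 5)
Your proposal is correct and takes essentially the same route as the paper: the paper also defines $\phi_{2N}$ as the restriction of $\phi=\psi\circ G$ and justifies the bound $2N-3$ by observing that even parts drop by $3$ or $4$ under $\psi$ (so an odd largest part, being at most $2N-1$, and an even largest part, being at most $2N$, both land at or below $2N-3$). You simply spell out in full the case analysis (free singletons, pairs, minimal segments, and the reverse direction through the Schur lift) that the paper leaves implicit in its one-line argument.
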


\begin{proof}
Notice that the image of any even part under $\psi$ is decreased by either $4$ or $3$. So $\phi_{2N}=\phi$ works.  
\end{proof}

\bigskip

We can now handle the case of odd subscript Alladi-Schur polynomials. To do this, we need one more relation.

Let $\chi_n$ be the indicator function for the set of multiples of $n$. Then, we trivially have 
\begin{equation}
 d_N(x)=d_{N-1}(x)+x^{1+\chi_2(N)}q^{N}d_{N-3-\chi_3(N)}(x).   
\end{equation}

The bijective proof of this can be obtained by simply removing any occurrence of the part $N$.\\

 In the proofs of the cases below, unless otherwise stated, each step will utilize a result that is equivalent to at most one of equations $(4)$ or $(6)$. Since we have bijective proofs for both, the bijective proof of any of the cases below can be simply ``read off" the $q$-series proof. This will be explicitly illustrated for the case where the subscript is $=6N-1$. For all the cases below $N\geq1$, in other words, the subscript of the LHS is $\geq5$.

\bigskip

\begin{thm}

 For $N\geq3$, we have the following.

   $$ d_{2N-1}(x)=\lambda(x)(d_{2N-4}(xq^2)
+xq^{2N-1}(1-xq)d_{2N-7}(xq^2)).$$

\end{thm}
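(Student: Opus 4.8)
The plan is to obtain relation $(5)$ entirely from the two ingredients $(4)$ and $(6)$, both of which already have bijective proofs, and then to transfer bijectivity automatically: every use of $(6)$ is the map ``split off the partitions whose largest part equals the current maximum,'' and every use of $(4)$ is the bijection $\phi$ of Theorem $4$ (via $\phi_{2N}=\phi$ of Theorem $6$). A finite composition of such maps is again an explicit bijection, so it suffices to produce a chain of applications of $(4)$ and $(6)$ whose net effect is $(5)$; the bijective account demanded by Theorem $7$ is then read off the chain.

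First I would peel the top part off $d_{2N-1}$. Since $\chi_2(2N-1)=0$, equation $(6)$ gives $d_{2N-1}(x)=d_{2N-2}(x)+xq^{2N-1}d_{2N-4-\chi_3(2N-1)}(x)$, and $(4)$ rewrites $d_{2N-2}(x)=\lambda(x)d_{2N-5}(xq^2)$. To turn $d_{2N-5}(xq^2)$ into the $d_{2N-4}(xq^2)$ demanded by $(5)$, I would apply $(6)$ at the argument $xq^2$ with subscript $2N-4$, obtaining $d_{2N-4}(xq^2)=d_{2N-5}(xq^2)+x^2q^{2N}d_{2N-7-\chi_3(2N-4)}(xq^2)$; and for the leftover singleton term I would use $(4)$ once more as $d_{2N-4}(x)=\lambda(x)d_{2N-7}(xq^2)$. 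Collecting everything and using $xq^{2N-1}-x^2q^{2N}=xq^{2N-1}(1-xq)$ reproduces the right-hand side of $(5)$.

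The only real difficulty, and the main obstacle, is the factor $d_{N-3-\chi_3(N)}$ in $(6)$: the argument must be split according to $2N-1\bmod 3$, i.e. into the three odd residues displayed as the subscripts $6N-1,\,6N+1,\,6N+3$. When $3\nmid 2N-1$ one has simultaneously $\chi_3(2N-1)=\chi_3(2N-4)=0$, so the chain of the previous paragraph closes on its own and yields $(5)$ with no extra work; these are the two generic cases, and the illustrated subscript $6N-1$ is one of them. The stubborn case is $3\mid 2N-1$ (equivalently $N\equiv 2\pmod 3$), where $\chi_3(2N-1)=\chi_3(2N-4)=1$: now the middle factor collapses to the odd-subscript $d_{2N-5}$ and the chain does not close by itself. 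Here I would observe that, after one further use of $(6)$ at argument $xq^2$ with subscript $2N-7$, the residual identity becomes exactly $(5)$ for the smaller odd subscript $2N-5=2(N-2)-1$; and since $N-2\equiv 0\pmod 3$ gives $3\nmid 2N-5$, that smaller instance is one of the already-settled generic cases (applicable as soon as $N-2\ge 3$). Thus the multiple-of-$3$ case borrows the generic result at index $N-2$, and the argument closes; the few small subscripts $2N-1\in\{5,7,9\}$ are checked by hand, which also covers the mild boundary uses of $(4)$ at subscripts below $6$ (where $d_{-1}=1$).

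Finally, I would record the bijection for the illustrated subscript $6N-1$, tracking each arrow on a Schur partition: $(6)$ separates the partitions containing a part equal to the running maximum, and $(4)=\phi$ applies the grouping-and-reduction of Theorems $4$ and $6$. Rewriting $(5)$ in manifestly positive form, namely $d_{2N-1}(x)+\lambda(x)x^2q^{2N}d_{2N-7}(xq^2)=\lambda(x)d_{2N-4}(xq^2)+\lambda(x)xq^{2N-1}d_{2N-7}(xq^2)$, displays it as an equality of cardinalities of disjoint unions of the sets $\mathcal{D}_{M}(m,n)$, and the composite of the part-removal maps with $\phi$ is the required bijection between those unions.
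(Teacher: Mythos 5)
Your proposal is correct and follows essentially the same route as the paper: both derive $(5)$ purely from the bijectively established relations $(4)$ and $(6)$ via a residue-class case split, handle the $3\mid 2N-1$ case by reducing it to the already-settled non-multiple-of-$3$ case, and then read the bijection off the chain by rewriting the identity in manifestly positive form. The only differences are organizational --- you merge the two generic residue classes into one computation (a consolidation the paper itself notes is possible) and peel the largest part downward where the paper's first case builds upward --- so no further comment is needed.
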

\begin{proof}
\begin{case}[subscript is $\equiv -1 \pmod{6}$]

\begin{align*}
d_{6N-1}(x)&=d_{6N}(x)-x^2q^{6N}d_{6N-4}(x)\\
&=\lambda(x)(d_{6N-3}(xq^2)
-x^2q^{6N}d_{6N-7}(xq^2))\\
&=\lambda(x)((d_{6N-4}(xq^2)+xq^{6N-1}d_{6N-7}(xq^2))
-x^2q^{6N}d_{6N-7}(xq^2))\\
&=\lambda(x)(d_{6N-4}(xq^2)
+xq^{6N-1}(1-xq)d_{6N-7}(xq^2))
\\
\end{align*}
\end{case}
\begin{case}[subscript is $\equiv 1 \pmod{6}$]

\begin{align*}
d_{6N+1}(x)&=d_{6N}(x)+xq^{6N+1}d_{6N-2}(x)\\
&=\lambda(x)(d_{6N-3}(xq^2)
+xq^{6N+1}d_{6N-5}(xq^2))\\
&=\lambda(x)((d_{6N-2}(xq^2)-x^2q^{6N+2}d_{6N-5}(xq^2))
+xq^{6N+1}d_{6N-5}(xq^2))\\
&=\lambda(x)(d_{6N-2}(xq^2)
+xq^{6N+1}(1-xq)d_{6N-5}(xq^2))
\\
\end{align*}
\end{case}
\newpage

\begin{case}[subscript is $\equiv 3 \pmod{6}$]

The proof of this case utilizes the result of case 1.

\begin{align*}
d_{6N+3}(x)
&=  d_{6N+2}(x)+xq^{6N+3}(d_{6N-1}(x))\\
&= \lambda(x)(d_{6N-1}(xq^2)
+xq^{6N+3}(d_{6N-4}(xq^2)
+xq^{6N-1}(1-xq)d_{6N-7}(xq^2)))\\
&=\lambda(x)(d_{6N-1}(xq^2)+x^2q^{6N+4}d_{6N-4}(xq^2)-x^2q^{6N+4}d_{6N-4}(xq^2) \\
&\quad +xq^{6N+3}(d_{6N-4}(xq^2)
+xq^{6N-1}(1-xq)d_{6N-7}(xq^2))) 
\\
&=\lambda(x)(d_{6N}(xq^2)
+xq^{6N+3}(1-xq)(d_{6N-4}(xq^2)
+xq^{6N-1}d_{6N-7}(xq^2)))\\
&=\lambda(x)(d_{6N}(xq^2)
+xq^{6N+3}(1-xq)d_{6N-3}(xq^2))
\end{align*}
\end{case}
\end{proof}

The attentive reader will notice that the first two cases could be merged into a single case. In addition, the first step of all these cases could be combined into a single first step. However, doing this will make the proofs much harder to read.\\

Allow me to illustrate the bijective account for the case where the subscript is $\equiv -1 \pmod{6}$. Since we need to prove an identity involving a minus sign bijectively, some rearrangement is convenient. To that end, rewrite the identity as follows. 
$$d_{6N-1}(x)=d_{6N}(x)-x^2q^{6N}d_{6N-4}(x)=\lambda(x)(d_{6N-4}(xq^2)
+xq^{6N-1}d_{6N-7}(xq^2))-x^2q^{6N}d_{6N-4}(x)$$

Now, begin by constructing a bijection $$\phi^*:\mathcal{D}_{6N}(m,n)\to I(6N,m,n),$$ where $$I(6N,m,n)=\bigcup_{i=0}^2(\mathcal{D}_{6N-4}(m-i,n-2m+i)\cup\mathcal{D}_{6N-7}(m-i-1,n-2m+i-(6N-3))),$$ defined as follows. For $\pi\in\mathcal{D}_{6N}(m,n)$ if $6N-3\in\phi(\pi)$ then $\phi^*(\pi)=\phi(\pi)\setminus\{6N-3\}$. Otherwise, $\phi^*(\pi)=\phi(\pi)$.\\ 

We immediately obtain the following bijection from the restriction. 
$$\phi^*\vert_{\mathcal{D}_{6N-1}(m,n)}:\mathcal{D}_{6N-1}(m,n)\to I(6N,m,n)\setminus\phi^*(\mathcal{D}_{6N}(m,n)\setminus\mathcal{D}_{6N-1}(m,n))$$

On the other hand, we have the following bijections.

$$\phi^*(\mathcal{D}_{6N}(m,n)\setminus\mathcal{D}_{6N-1}(m,n))\xrightarrow[]{{\phi^*}^{-1}}\mathcal{D}_{6N}(m,n)\setminus\mathcal{D}_{6N-1}(m,n)\xrightarrow[]{\pi\to\pi \setminus\{6N\}}\mathcal{D}_{6N-4}(m-2,n-6N) $$\\
In other words, the restriction $\phi^*\vert_{\mathcal{D}_{6N-1}(m,n)}$ along with the above bijections for\\$\phi^*(\mathcal{D}_{6N}(m,n)\setminus\mathcal{D}_{6N-1}(m,n))$ give a complete bijective account of the identity 

$$d_{6N-1}(x)=\lambda(x)(d_{6N-4}(xq^2)
+xq^{6N-1}d_{6N-7}(xq^2))-x^2q^{6N}d_{6N-4}(x).$$

If one prefers the form
$$d_{6N-1}(x)=\lambda(x)(d_{6N-4}(xq^2)
+xq^{6N-1}(1-xq)d_{6N-7}(xq^2)),$$
then one more application of $\phi$ on $\mathcal{D}_{6N-4}(m-2,n-6N)$ gives just that. \\

The bijective accounts of the $1,3\equiv \pmod{6}$ are achieved analogously.

\begin{remark}
We note that in \cite{ANDREWS2022}, Andrews, Chern, and Li gave an analytic counterpart of Andrews' refinement of the Alladi-Schur theorem. They gave both a $q$-hypergeometric proof and a computer-assisted proof. However, the techniques in \cite{ANDREWS2022} are categorically different from the work presented in this paper and have not influenced the proofs in any way. This is clear for the bijective portion of this paper, but it is even true for the parts involving the Alladi-Schur polynomials. Indeed, despite the fact that Andrews originally established equation (1) using the Alladi-Schur polynomials, the paper \cite{ANDREWS2022} does not mention the Alladi-Schur polynomials.
\end{remark}

\section*{Acknowledgment}
I would like to thank my doctoral advisor, Dr. Krishnaswami Alladi, for carefully reviewing my drafts and making suggestions that made my work clearer and more readable. I would also like to thank Saurabh Anand for his thoughtful suggestions regarding computer verification. In addition, I extend my thanks to Ae Ja Yee for her suggestion regarding the presentation of this paper and to George Andrews for his encouraging remarks. Lastly, I greatly appreciate the referee's helpful suggestions.  

\newpage
\appendix
\section{A couple of alternative perspectives}
This appendix provides an alternative definition of an upper minimal segment and an alternative representation of my bijection $f$.

\begin{definition}
An \underline{upper minimal segment} $\Sigma$ is a sub-partition of a Schur partition $\pi$ that, in the generic upper factorization of $\pi$, begins with an odd singleton $>3$ and ends with an even singleton, where every even singleton of $\Sigma$ is larger than its largest odd singleton. In addition, the following properties hold.\\

1-The gap between consecutive singletons of the same parity is $4$.\\

2-If an odd singleton immediately precedes an even singleton, the gap is $5$.\\

3-If an odd singleton immediately precedes a pair or an even singleton immediately succeeds a pair, then the gap is $4$.\\

4-If an even singleton immediately precedes a pair or an odd singleton immediately succeeds a pair, then the gap is $3$.  \\

5-The gap between consecutive pairs is $3$.
\end{definition}
The reader will see that this definition is equivalent to the in-text definition. Indeed, primitive upper minimal segments abide by the above list, and adding a prefix or suffix does not violate the rules. Likewise, it can be shown inductively that the above partitions are contained in the set of upper minimal segments.\\

In private communications \cite{KA}, K. Alladi has pointed out to me that the operation $\phi'$ has a nice representation in $2$-modular graphs. Namely, it corresponds to removing the leftmost column. Here, the rows correspond to the parts, and the top row is the largest part. To this end, we can represent the steps of $f$ as sequences of pairs $(\pi_i,\pi'_i)\in \mathcal{D}\times \mathcal{C}$ where $\pi_{i+1}=\phi(\pi_i)$ and $\pi'_{i+1}$ is obtained by adding a rightmost column of length $m(\pi_i)$, with the bottom $g(\pi_i)$ entries being $1$ and the remaining entries being $2$, to the  $2$-modular graph $\pi'_i$. An example is illustrated below.
\[
7+[11+14]+18\quad\begin{array}{|c}
\text{empty}
\end{array}\quad \xrightarrow[g(\pi)=0]{\phi}\quad
4+[8+11]+15\quad\begin{array}{|c}
2 \\
2 \\
 2 \\
 2 \\
 2\\
2\\
\end{array}\]

\[\xrightarrow[g(\pi)=1]{\phi}\quad
1+[5+8]+13\quad\begin{array}{|cc}
2 & 2 \\
2 & 2\\
 2 & 2\\
 2 & 2\\
 2 & 2\\
2 & 1\\
\end{array}\quad \xrightarrow[g(\pi)=1]{\phi}\quad
[2+5]+11\quad\begin{array}{|ccc}
2 & 2 &2 \\
2 & 2 &2\\
2 & 2 &2 \\
2 & 2 &2 \\
2 & 2 &1\\
2 & 1\\
\end{array}\]

\[\xrightarrow[g(\pi)=2]{\phi}\quad
3+9\quad\begin{array}{|cccc}
2 & 2 &2&2 \\
2 & 2 &2&2\\
2 & 2 &2&1 \\
2 & 2 &2&1 \\
2 & 2 &1\\
2 & 1\\
\end{array}\quad \dots \quad
\text{empty}\quad\begin{array}{|ccccccccc}
2 & 2 &2&2&2&2&2&2&1 \\
2 & 2 &2&2&2&1\\
2 & 2 &2&1 \\
2 & 2 &2&1 \\
2 & 2 &1\\
2 & 1\\
\end{array}\]

Here, the dots correspond to the fact that $\phi$ and $\phi'$ agree in $ \mathcal{D}\cap \mathcal{C}$. This means we can skip the remaining steps at that point since we know that the final step will just be $$\pi_i|\pi'_i\dots \text{empty}|\pi_i+\pi'_i.$$
The reader is invited to perform the remaining steps and verify this. The notation $\pi_i+\pi'_i$ means that the top row (the largest part) of $\pi_i$ is added to the top row of $\pi_i'$ and the second row to the second row, and so on.

\bibliographystyle{amsplain}

\end{document}